\theoremstyle{definition}
\newtheorem{thm}{Theorem}
\newtheorem{theorem}[thm]{Theorem}
\newtheorem{corollary}[thm]{Corollary}
\newtheorem{lemma}[thm]{Lemma}
\newtheorem{remark}[thm]{Remark}
\newtheorem{exm}{Example}
\newtheorem*{exm*}{Example}
\numberwithin{equation}{section}
\numberwithin{thm}{section}
\newcommand{\ft}{{\mathfrak{t}}}
\newcommand{\fg}{{\mathfrak{g}}}
\newcommand{\fk}{{\mathfrak{k}}}
\newcommand{\g}{\mathfrak{t}}
\newcommand{\gd}{\mathfrak{t}^{*}}
\renewcommand{\SS}{{\mathbb{S}}}
\newcommand{\RR}{{\mathbb{R}}}
\newcommand{\R}{\mathbb{R}}
\newcommand{\ZZ}{{\mathbb{Z}}}
\newcommand{\CC}{{\mathbb{C}}}
\newcommand{\cR}{{\mathcal{R}}}
\title{Balanced Fiber Bundles and GKM Theory}
\author[V. Guillemin]{Victor Guillemin}
\address{Department of Mathematics, MIT, Cambridge, MA 02139}
\email{vwg@math.mit.edu}
\author[S. Sabatini]{Silvia Sabatini}
\address{Department of Mathematics, EPFL, Lausanne, Switzerland}
\email{silvia.sabatini@epfl.ch}
\author[C. Zara]{Catalin Zara}
\address{Department of Mathematics, University of Massachusetts
Boston, MA 02125}
\email{czara@math.umb.edu}
\date{May 2, 2011}
\begin{document}

\begin{abstract}
Let $T$ be a torus and $B$ a compact $T-$manifold.
Goresky, Kottwitz, and MacPherson show in \cite{GKM}
that if $B$ is (what was subsequently called) a GKM
manifold, then there exists a simple combinatorial
description of the equivariant cohomology ring $H_T^*(B)$
as a subring of $H_T^*(B^T)$. In this paper
we prove an analogue of this result for $T-$equivariant
fiber bundles: we show that if $M$ is a $T-$manifold and
$\pi \colon M \to B$ a fiber bundle for which $\pi$
intertwines the two $T-$actions, there is a simple
combinatorial description of $H_T^*(M)$ as a subring of
$H_T^*(\pi^{-1}(B^T))$. Using this result we obtain fiber 
bundle analogues of results of \cite{GHZ} on GKM theory 
for homogeneous spaces.
\end{abstract}

\maketitle

\tableofcontents

\section{Introduction}

Let $T=(S^1)^n$ be an $n-$dimensional torus and $M$ a compact,
connected $T-$manifold.
We recall that the equivariant cohomology
$H_T^*(M) = H_T^*(M; \RR)$ of $M$ is defined as the
usual cohomology of the quotient $(M \times E)/T$,
where $E$ is the total space of the classifying bundle
of the group $T$. Let
\begin{equation}\label{eq:1.1}
  \pi \colon M \to B
\end{equation}
be a $T-$equivariant fiber bundle. We will
assume that the base $B$ is simply
connected and that the typical fiber is connected.

Then one gets a fiber bundle
\begin{equation}\label{eq:1.2}
  (M \times E)/T \to (B \times E)/T
\end{equation}
and a Serre-Leray spectral sequence relating the
equivariant cohomology groups of $M$ and $B$;
the $E_2-$term of this spectral sequence is the product
\begin{equation}\label{eq:1.3}
  H^*(F) \otimes H^*((B\times E)/T)
\end{equation}
where $F$ is the fiber of the bundle \eqref{eq:1.2}
and hence of the bundle \eqref{eq:1.1}. Thus if the
spectral sequence collapses at this stage, one
gets an isomorphism of
additive cohomology
\begin{equation}\label{eq:1.4}
  H_T^*(M) \simeq H^*(F) \otimes H_T^*(B) \; .
\end{equation}
However, this isomorphism doesn't say much about how
the ring structure of $H_T^*(B)$ and $H_T^*(M)$ are related.
One of the main goals of this paper is to address that
question. We begin by recalling that one approach for
computing the equivariant cohomology ring of a $T-$manifold
$M$ is by Kirwan localization. Namely, if $H_T^*(M)$ is
torsion-free, the restriction map
\begin{equation*}
 i^* \colon H_T^*(M) \to H_T^*(M^T)
\end{equation*}
is injective and hence computing $H_T^*(M)$ reduces to
computing the image of $H_T^*(M)$ in $H_T^*(M^T)$.
If $M^T$ is finite, then
$$H_T^*(M^T) = \bigoplus_{p\in M^T} \SS(\ft^*)\; ,$$
with one copy of $H_T^*(pt) \simeq \SS(\ft^*)$
for each $p \in M^T$, where
$\SS(\ft^*)$ is the symmetric algebra of $\ft^*$.
Determining where $H_T^*(M)$ sits
inside this sum is a challenging problem in combinatorics.
However, one class of spaces for which this problem has a
simple and elegant solution is the one introduced by
Goresky-Kottwitz-MacPherson in their seminal paper \cite{GKM}.
These are now known as \emph{GKM spaces}, a $T-$manifold $M$ being ``GKM" if
\begin{itemize}
\item[(a)] $M^T$ is finite
\item[(b)] $M$ is equivariantly formal, i.e.
$$
H_T(M)\simeq H(M)\otimes_{\mathbb{C}}\SS(\ft^*)
$$
as $\SS(\ft^*)$ modules.
\item[(c)]
 For every codimension
one subtorus $T' \subset T$, the connected components
of $M^{T'}$ are either points or two-spheres. 
\end{itemize}
If $S$ is one of the edge two-spheres, then $S^T$ consists
of exactly two $T-$fixed points, $p$ and $q$ (the ``North''
and ``South'' poles of $S$). To each GKM space $M$ we attach
a graph $\Gamma=\Gamma_M$ with set of vertices $V_\Gamma=M^T$,
and edges corresponding to these two-spheres.
If $M$ has an invariant almost complex or symplectic structure,
then the isotropy representations on tangent spaces at fixed points
are complex representations and their weights are well-defined.

These data determine a map
$$\alpha \colon E_{\Gamma} \to \ZZ_T^*$$
of oriented edges of $\Gamma$ into the weight lattice of $T$.
This map assigns to the edge (two-sphere) $S$, joining $p$
to $q$ and oriented from $p$ to $q$, the weight of the isotropy
representation of $T$ on the tangent space to $S$ at $p$.
The map $\alpha$ is called the \emph{axial function} of the
graph $\Gamma$. We use it to define a subring
$H_{\alpha}^*(\Gamma)$ of $H_T^*(M^T)$ as follows.
Let $c$ be an element of $H_T^*(M^T)$, \emph{i.e.} a function
which assigns to each $p\in M^T$ an element $c(p)$ of
$H_T^*(pt)=\mathbb{S}(\mathfrak{t}^*)$. Then $c$
is in $H_{\alpha}^*(\Gamma)$ if and only if for each edge
$e$ of $\Gamma$ with vertices $p$ and $q$ as endpoints,
$c(p)\in \mathbb{S}(\mathfrak{t}^*)$ and
$c(q)\in \mathbb{S}(\mathfrak{t}^*)$ have the same image
in $\mathbb{S}(\mathfrak{t}^*)/\alpha_e\mathbb{S}(\mathfrak{t}^*)$.
(Without the invariant almost complex or symplectic structure, the
isotropy representations are only real representations and the weights
are defined only up to sign; however, that does not change the
construction of $H_{\alpha}^*(\Gamma)$.)

For GKM spaces, a direct consequence of a theorem of Chang and Skjelbred
(\cite{CS}) is that $H_{\alpha}^*(\Gamma)$
is the image of $i^*$ (see \cite{GKM}), and
therefore there is an isomorphism of rings
\begin{equation}\label{eq:1.6}
 H_T^*(M) \simeq H_{\alpha}^*(\Gamma) \; .
\end{equation}
One of our main results is a generalization
of \eqref{eq:1.6} for $T-$equivariant fiber bundles
\begin{equation}\label{eq:fiber_bundle}
\pi \colon M \to B
\end{equation}
for which the total space $M$ is
equivariantly formal and the base $B$ is a GKM space.
By the Kirwan Theorem the composite map
$$H_T^*(M) \to H_T^*(\pi^{-1}(B^T)) \to H_T^*(M^T)$$
is injective. Hence one has an injective homomorphism of rings
\begin{equation}\label{eq:new1.6}
H_T^*(M) \to \bigoplus_{p \in B^T} H_T^*(\pi^{-1}(p))\; ,
\end{equation}
and so to determine the ring structure of $H_T^*(M)$ it
suffices to determine the image of this mapping. This we will
do by a GKM type recipe similar to \eqref{eq:1.6}.

Let $(\Gamma=\Gamma_B, \alpha)$
be the GKM graph associated to $B$, and
for $p \in B^T$ (\emph{i.e.} a vertex of $\Gamma$) let
$F_p = \pi^{-1}(p)$. If $e$
is an edge of $\Gamma$ joining the vertices $p$ and $q$,
and $T_e$ is the subtorus of $T$ with Lie algebra $\ker \alpha_e$, 
then $F_p$ and $F_q$ are isomorphic as $T_e-$spaces and hence
\begin{equation}\label{eq:new1.7}
H_T^*(F_p)/\langle \alpha_e \rangle = H_T^*(F_q)/\langle \alpha_e \rangle \; ,
\end{equation}
and denoting the ring \eqref{eq:new1.7} by $\cR_e$,
we will prove the following generalization of~\eqref{eq:1.6}.

\begin{thm}\label{th:CS-bundles}
  A function
  $$c \colon V_\Gamma \to \bigoplus_{p \in B^T} H_T^*(F_p), \; c(p) \in H_T^*(F_p)$$
  is in the image of \eqref{eq:new1.6} if and only if for every edge
  $e=(p,q)$ of $\Gamma$, the images of $c(p)$ and $c(q)$
  in $\cR_e$ coincide.
\end{thm}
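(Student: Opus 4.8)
The plan is to imitate the Leray--Hirsch/Chang--Skjelbred proof of the classical GKM theorem, carrying the fibre along. Write $B_1=\bigcup_{e}S_e\subseteq B$ for the one--skeleton of the GKM space $B$ (the union of the edge two--spheres), so that $B_1^T=B^T$ and $\pi^{-1}(B_1)=\bigcup_{e}\pi^{-1}(S_e)$, and note that the map \eqref{eq:new1.6} factors as
\[
H_T^*(M)\xrightarrow{\ \pi_1^*\ }H_T^*\bigl(\pi^{-1}(B_1)\bigr)\xrightarrow{\ \rho_1\ }\bigoplus_{p\in B^T}H_T^*(F_p).
\]
First I would use equivariant formality of $M$ to deduce that the fibre $F$ is equivariantly formal, equivalently that $H^*(M)\to H^*(F)$ is onto (this is the main point to establish; see below), and choose $\theta_1,\dots,\theta_N\in H_T^*(M)$ restricting to a basis of $H^*(F)$. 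Since $B$ is simply connected, $\pi$ has trivial monodromy, so the $\theta_i$ restrict to a basis of $H^*(F_p)$ for every $p$; by Leray--Hirsch, the $\theta_i$ (restricted to each space) then exhibit $H_T^*(M)$ as a free $H_T^*(B)$--module, $H_T^*(\pi^{-1}(B_1))$ as a free $H_T^*(B_1)$--module, and each $H_T^*(F_p)$ as a free $\SS(\ft^*)$--module, all on these $N$ generators. With this in hand the theorem reduces to two claims: \textbf{(I)} $\pi_1^*$ is surjective, so that the image of \eqref{eq:new1.6} coincides with the image of $\rho_1$; and \textbf{(II)} the image of $\rho_1$ is exactly the set of $c$ satisfying the edge conditions.

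For \textbf{(I)} I would use that $\pi^{-1}(B_1)=M\times_B B_1$, so that $\pi_1^*$ is $H_T^*(B)$--linear and sends $\theta_i\mapsto\theta_i|_{\pi^{-1}(B_1)}$; since source and target are free on the $\theta_i$ over $H_T^*(B)$ and over $H_T^*(B_1)$ respectively, $\pi_1^*$ is onto as soon as the restriction $H_T^*(B)\to H_T^*(B_1)$ is onto, which is precisely the Chang--Skjelbred theorem for the GKM space $B$.

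For \textbf{(II)} I would write a class of $H_T^*(\pi^{-1}(B_1))$ as $\sum_i a_i\,\theta_i|$ with $a_i\in H_T^*(B_1)$; then $\rho_1$ sends it to $\bigl(\sum_i (a_i|_p)\,\theta_i|_{F_p}\bigr)_{p}$, so that, by the classical GKM description of $B$, $\operatorname{im}\rho_1=\{\,\sum_i b_i\,\theta_i|_{F_\bullet}\;:\;(b_i(p))_p\in H_\alpha^*(\Gamma)\,\}$. On the other hand an arbitrary $c$ has a unique expansion $c(p)=\sum_i c_i(p)\,\theta_i|_{F_p}$ with $c_i(p)\in\SS(\ft^*)$, and reduction modulo $\langle\alpha_e\rangle$ turns $\{\theta_i|_{F_p}\}$ into an $\SS(\ft^*)/\langle\alpha_e\rangle$--basis of $\cR_e$. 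Hence the edge condition at $e=(p,q)$ becomes $\sum_i c_i(p)\,\overline{\theta_i|_{F_p}}=\sum_i c_i(q)\,\overline{\theta_i|_{F_q}}$ in $\cR_e$. The key local input here is that parallel transport along a path in $S_e\cong S^2$ --- well defined up to $T_e$--equivariant homotopy because $S_e$ is simply connected and $T_e$ acts fibrewise on $\pi^{-1}(S_e)$ --- is exactly the identification of $H_T^*(F_q)/\langle\alpha_e\rangle$ with $H_T^*(F_p)/\langle\alpha_e\rangle$ appearing in \eqref{eq:new1.7}, and that it carries $\overline{\theta_i|_{F_q}}$ to $\overline{\theta_i|_{F_p}}$, since $\theta_i|_{F_p}$ and $\theta_i|_{F_q}$ are the two restrictions of the single class $\theta_i|_{\pi^{-1}(S_e)}$. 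Comparing coefficients, the edge condition at $e$ is equivalent to $c_i(p)\equiv c_i(q)\pmod{\alpha_e}$ for all $i$; running over all edges, $c$ satisfies every edge condition if and only if each $(c_i(p))_p$ lies in $H_\alpha^*(\Gamma)$, i.e. if and only if $c\in\operatorname{im}\rho_1$. Together with \textbf{(I)} this would prove the theorem (and in particular re-prove the easy ``only if'' direction).

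The step I expect to be the main obstacle is the very first one: showing that ``$M$ equivariantly formal plus $B$ GKM'' forces the fibre $F$ to be equivariantly formal, equivalently forces the Serre spectral sequence of $\pi$ to collapse. This is what makes Leray--Hirsch available uniformly and, crucially, what guarantees that $\{\overline{\theta_i|_{F_p}}\}$ is a \emph{free} basis of $\cR_e$ over $\SS(\ft^*)/\langle\alpha_e\rangle$. A secondary technical point, needed to make the coefficient matching in \textbf{(II)} legitimate, is to check that the isomorphism of \eqref{eq:new1.7} defining $\cR_e$ is indeed the one induced by parallel transport along $S_e$ (equivalently, by the $T_e$--equivariant diffeomorphism $F_p\cong F_q$), so that both sides are compared with respect to compatible bases. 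Granting these, \textbf{(I)} and \textbf{(II)} are essentially formal consequences of Leray--Hirsch together with the classical GKM/Chang--Skjelbred theorem for $B$.
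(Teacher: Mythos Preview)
Your Leray--Hirsch reduction is a genuinely different route from the paper's, and the gap you flag is real and, in the paper's setting, fatal. The paper does \emph{not} deduce equivariant formality of the fibre from that of $M$; on the contrary, in Section~5 it explicitly introduces ``$F_p$ equivariantly formal'' as an \emph{additional} hypothesis, needed only for the later holonomy result \eqref{eq:newmain}, not for Theorem~\ref{th:CS-bundles}. So your first step --- proving that $M$ equivariantly formal and $B$ GKM force the Serre spectral sequence to collapse --- is not something you can expect to extract from the stated hypotheses, and without it your argument does not go through: both the freeness of $H_T^*(\pi^{-1}(B_1))$ over $H_T^*(B_1)$ in \textbf{(I)} and, more seriously, the freeness of $\cR_e$ over $\SS(\ft^*)/\langle\alpha_e\rangle$ that you need for the coefficient comparison in \textbf{(II)} depend on it.

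The paper avoids this entirely by proving a Chang--Skjelbred theorem \emph{for the bundle itself}, rather than reducing to the one for $B$. Concretely, it shows (via support arguments for $H_T^*(M\setminus X^T)$, using only freeness of $H_T^*(M)$) that the image of $H_T^*(M)\to H_T^*(X^T)$ is $\bigcap_i i_{K_i}^*H_T^*(X^{K_i})$, and then computes each piece by analyzing the case $B=\CC P^1$ through the short exact sequence
\[
0\longrightarrow H_T^k(M)\longrightarrow H_T^k(F_1)\oplus H_T^k(F_2)\longrightarrow H_{T_e}^k(F)\longrightarrow 0,
\]
obtained from the long exact sequence of the pair $(M,X^T)$ and a fibre-integration identification of $H_{T}^{k+1}(M\setminus X^T)_c$ with $H_{T_e}^k(F)$. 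Nothing here uses that $H_T^*(F)$ is free over $\SS(\ft^*)$. If you are willing to add fibre equivariant formality as a hypothesis, your argument becomes a clean alternative proof (and the ``secondary technical point'' about parallel transport is then exactly the commutative square the paper writes down in Section~5); but as a proof of Theorem~\ref{th:CS-bundles} under the paper's hypotheses, it is incomplete.
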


One of our main applications of this result will be a fiber 
bundle version of the main result in \cite{GHZ}. In more detail:
In \cite{GHZ} it is shown that if $G$ is a compact semisimple Lie group, 
$T$ a Cartan subgroup, and $K$ a closed subgroup of $G$, then the 
following conditions are equivalent:
\begin{enumerate}
\item The action of $T$ on $G/K$ is GKM;
\item The Euler characteristic of $G/K$ is non-zero;
\item $K$ is of maximal rank, \textit{i.e.} $T \subset K$.
\end{enumerate}

Moreover, for homogeneous spaces of the form $G/K$ one has a description, 
due to Borel, of the equivariant cohomology ring of $G/K$ as a tensor product
\begin{equation}\label{eq:BorelGK}
H_T^*(G/K) = \SS(\ft^*)^{W_K} \oplus_{\SS(\ft^*)^{W_G}} \SS(\ft^*)
\end{equation}
and it is shown in \cite{GHZ} how to reconcile this description 
with the description \eqref{eq:1.6}.

Our fiber bundle version of this result will be a a description of the 
cohomology ring of $G/K_1$, with $K_1 \subset K$, in terms of the fiber bundle 
$G/K_1 \to G/K$, a description that will be of Borel type on the fibers and 
of GKM type on the base. This result will (as we've already shown in 
special cases in \cite{GSZ}) enable one to interpolate between two 
(in principle) very different descriptions of the ring $H_T(G/K)$.

The fibrations $G/K_1 \to G/K$ are special cases of a class of fibrations 
which come up in many other context as well (for instance in the 
theory of toric varieties) and which for the lack of a better name 
we will call \emph{balanced fibrations}.

To explain what we mean by this term let $F_p$ and $F_q$ be as in 
\eqref{eq:new1.7}. Then there is a diffeomorphism $f_e \colon F_p \to F_q$, 
canonical up to isotopy, which is $T_e-$invariant but in general not 
$T-$invariant. We will say that the fibration $M \to B$ is balanced at 
$e$ if one can twist the $T$ action on $F_q$ to make $f_e$ be $T-$invariant, 
\textit{i.e.} if one can find an automorphism $\tau_e \colon T \to T$, 
restricting to the identity on $T_e$, such that 
\begin{equation}
f_e (gx) = \tau_e(g)f_e(x)
\end{equation}
for all $g \in T$ and $x \in F_p$. (Since $f_e$ is unique up to isotopy, 
this $\tau_e$, if it exists, is unique.)

Suppose now that the $T$ action on $M$ is balanced in the sense that it 
is balanced at all edges $e$.
Then, denoting by $Aut(F_p)$ the group of isotopy classes of diffeomorphisms of $F_p$
and by $Aut(T)$ the group of automorphisms of $T$, one gets a homomorphism of the loop group 
$\pi_1(\Gamma,p)$ into $Aut(F_p) \times Aut(T)$ mapping the loop of edges, $e_1,\ldots,e_k$,
to $(f_{e_k}\circ \cdots \circ f_{e_1},\tau_{e_k}\circ \cdots \circ \tau_{e_1} )$.
The image of this map we'll denote by $W_p$ and call the \textit{Weyl group of $p$}. This
group acts on $H_T(F_p)$ and, modulo some hypotheses on $M$ which we'll spell out more carefully in section 5, we'll show that there
is a canonical imbedding of $H_T(F_p)^{W_p}$ into $H_T(M)$ and that its image generates
$H_T(M)$ as a module over $H_T(B)$. More explicitily, we will 
show that 
\begin{equation}\label{eq:newmain}
H_T(M) = H_T(F_p)^{W_p} \otimes_{\SS(\ft^*)^{W_p}} H_{\alpha}(\Gamma_B)
\end{equation}
where $\Gamma_B$ is the GKM graph of $B$.

A few words about the organization of this paper. In Section\ref{sec:CS} 
we will generalize the Chang-Skjelbred theorem to equivariant fiber bundles, 
and in Section~\ref{sec:GKMFB} use this result to prove 
Theorem~\ref{th:CS-bundles}. 
In Section~\ref{sec:4} we will describe 
in more detail the results of \cite{GHZ} alluded 
to above and show that the fibrations $G/K_1 \to G/K$ are balanced.
Then in Section~\ref{sec:5} we will verify \eqref{eq:newmain} and in 
Section~\ref{sec:6} 
describe some connections between the results of this paper and results 
of \cite{GSZ} (where we work out the implications of this theory in 
much greater detail for the classical flag varieties of type 
$A_n$, $B_n$, $C_n$, and $D_n$).

The results of this paper are also related to the results of 
\cite{GZ2}, the topic of which is
K-theoretic aspects of GKM theory. (In some work-in-progress we are
investigating the implications of these results for GKM fibrations. In
particular we are able to show that there is a K-theoretic analogue of the
Chang-Skjelbred theorem of Section~\ref{sec:CS} and that it gives one an effective way
of computing the equivariant K-groups of balanced fiber bundles.)

\section{The Chang-Skjelbred Theorem for Fiber Bundles}
\label{sec:CS}

Let $\pi \colon M \to B$ be a $T-$equivariant fiber bundle
with $M$ equivariantly formal
and $B$ a GKM space. Let $K_i,\;i=1,\ldots,N$ be the codimension one isotropy groups
of $B$ and let $\mathfrak{k_i}$ be the Lie algebra of $K_i$. Since $B$ is GKM one has
the following result.
\begin{lemma}
If $K$ is an isotropy group of $B$, and $\fk$ is the Lie algebra of $K$, then
$$
\fk =\bigcap_{r=1}^m \mathfrak{k}_{i_r}
$$
for some multi-index $1\leqslant i_1< \ldots < i_m \leqslant N$.
\end{lemma}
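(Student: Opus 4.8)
The plan is to reduce the statement to a fact about the GKM graph of $B$, using the characterization of which subtori arise as isotropy groups. First I would recall that because $B$ is a GKM space, every point of $B$ lies in $B^{T'}$ for some codimension-one subtorus iff its isotropy group has codimension at least one, and more precisely the isotropy group of any point $b\in B$ is the intersection of the codimension-one isotropy groups $K_i$ containing it. Concretely, fix $b\in B$ with isotropy group $K$, and let $\fk$ be its Lie algebra. The containment $\fk\subseteq\bigcap_{r}\fk_{i_r}$ for those $i_r$ with $K\subseteq K_{i_r}$ is immediate; the content is the reverse inclusion, i.e. that $K$ is cut out inside $T$ by the codimension-one subtori through $b$.

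The key step is to pass to a neighborhood of the $T$-orbit of $b$ and use the slice theorem. By the slice theorem, an invariant neighborhood of $T\cdot b$ is equivariantly diffeomorphic to $T\times_K V$ for a $K$-representation $V$ with $V^K$ containing the slice directions; near $b$ the isotropy groups that occur are exactly the subgroups of the form $K_\chi=\{k\in K:\chi(k)\text{ acts trivially}\}$ intersected appropriately, i.e. they are determined by the weights of $V$. I would then invoke the GKM hypothesis (c): for each codimension-one subtorus $T'$, the fixed components $B^{T'}$ are points or two-spheres, which forces the weights appearing in the isotropy representation at any fixed point — and hence, via the slice analysis, the weights governing the isotropy filtration near $b$ — to be pairwise linearly independent, so that each hyperplane $\ker\chi$ in $\ft$ that can arise is the Lie algebra $\fk_i$ of one of the codimension-one isotropy groups. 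Intersecting these hyperplanes recovers $\fk$ exactly.

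Thus the main obstacle is the local argument identifying the possible isotropy subalgebras near a point of $B$ with intersections of the $\fk_i$: one must check that every weight hyperplane relevant to the isotropy representation of $K$ on the normal slice is of the form $\ker\alpha_e$ for an edge $e$ at some fixed point, equivalently that it equals some $\fk_i$. Here I would use that $B$ is equivariantly formal with finitely many fixed points (part of the GKM package): one can connect $b$ to a fixed point by deforming along the orbit closure or by a Morse-theoretic argument, and the codimension-one subtori fixing positive-dimensional submanifolds through that fixed point are precisely the $K_i$ by definition. Once this identification is in hand, writing $\fk=\bigcap_{r=1}^m\fk_{i_r}$ is just bookkeeping: take $\{i_1<\cdots<i_m\}$ to be the indices of all $K_i$ containing $K$, observe $\fk\subseteq\bigcap_r\fk_{i_r}$ trivially and $\bigcap_r\fk_{i_r}\subseteq\fk$ because any common zero of the defining weights acts trivially on the whole slice and on $T\cdot b$, hence lies in $K$.
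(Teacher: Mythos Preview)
The paper does not actually supply a proof of this lemma: it is stated as an immediate consequence of the GKM hypotheses and then used. So there is nothing to compare your argument against at the level of details; any correct argument you give is ``more'' than what the paper provides.

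Your overall strategy is sound, but the write-up conflates two different local models and leaves the crucial step vague. The slice at a non-fixed point $b$ gives a representation of $K$ (not of $T$) on the normal space $V$, so its weights live in $\fk^*$ and cut out hyperplanes in $\fk$, not in $\ft$; the sentence ``each hyperplane $\ker\chi$ in $\ft$'' does not parse as stated. What you really need is the $T$-representation at a genuine $T$-fixed point, and your phrase ``connect $b$ to a fixed point by deforming along the orbit closure or by a Morse-theoretic argument'' is too loose to do the job.

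The clean fix is this: let $K^0$ be the identity component of $K$, let $Z$ be the connected component of $B^{K^0}$ containing $b$, and note that $Z$ is a compact $T$-invariant submanifold, hence contains some $p\in B^T$. The weights of $T$ on $T_pB$ are pairwise linearly independent (this is exactly the GKM condition (c)), and $T_pZ$ is the sum of those weight spaces whose weights vanish on $\fk$; the generic isotropy Lie algebra on $Z$ computed in this linear model at $p$ is the intersection of the corresponding kernels $\ker\alpha_{j_r}$, while on the other hand it equals $\fk$ because $b\in Z$ has isotropy exactly $K$. Each $\ker\alpha_{j_r}$ is the Lie algebra of the stabilizer of the edge two-sphere at $p$ in direction $\alpha_{j_r}$, hence is one of the $\fk_i$. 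That gives $\fk=\bigcap_r\fk_{i_r}$ directly, without ever invoking the slice at $b$ itself.
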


For $K$ a subgroup of $T$ let $X^K=\pi^{-1}(B^K)$, where $B^K\subset B$ denotes the set
of points in $B$ fixed by $K$.
We recall (\cite[Section 11.3]{GS}) that if $A$ is a finitely generated $\SS(\gd)-$module,
then the annihilator ideal of $A$, $I_A$ is defined to be
$$
I_A=\{f\in \SS(\gd),\;fA=0\}\;,
$$
and the support of $A$ is the algebraic variety in $\g\otimes \CC$ associated with this ideal,
\textit{i.e.}
$$
\mbox{supp}A=\{x\in \g \otimes \CC, f(x)=0 \mbox{ for all }f\in I_A\}\;.
$$

Then from the lemma and \cite[Theorem 11.4.1]{GS}  one gets the following.

\begin{theorem}\label{support}
The $\SS(\gd)-$module $H_T^*(M\setminus X^T)$ is supported on the set
\begin{equation}\label{eq:2.1}
\displaystyle\bigcup_{i=1}^N \mathfrak{k}_i\otimes \CC
\end{equation}
\end{theorem}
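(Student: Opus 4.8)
The plan is to deduce Theorem~\ref{support} directly from \cite[Theorem 11.4.1]{GS} together with the lemma above. Recall that, for a $T$-manifold $Y$ with finitely many orbit types and with $H_T^*(Y)$ finitely generated over $\SS(\gd)$, \cite[Theorem 11.4.1]{GS} bounds the support of $H_T^*(Y)$ by the union, over $y\in Y$, of the complexified Lie algebras $\ft_y\otimes\CC$ of the stabilizers $T_y$. I would apply this with $Y=M\setminus X^T$. Since a fiber bundle projection is surjective, $M\setminus X^T=\pi^{-1}(B\setminus B^T)$ is an open $T$-invariant submanifold of the compact manifold $M$, hence has only finitely many orbit types; and since $X^T=\pi^{-1}(B^T)$ is a closed invariant submanifold, $M\setminus X^T$ is homotopy equivalent to the complement in $M$ of an open invariant tubular neighborhood of $X^T$, a compact manifold with boundary, so $H^*(M\setminus X^T)$ is finite dimensional. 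Running the Serre--Leray spectral sequence of $\bigl((M\setminus X^T)\times E\bigr)/T\to E/T$ --- whose $E_2$ term is $\SS(\gd)\otimes H^*(M\setminus X^T)$, as $E/T$ is simply connected --- and using that $\SS(\gd)$ is Noetherian, one finds that $H_T^*(M\setminus X^T)$ is finitely generated over $\SS(\gd)$, so \cite[Theorem 11.4.1]{GS} applies.

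It then remains to check that every stabilizer Lie algebra occurring on $M\setminus X^T$ is contained in one of the $\mathfrak{k}_i$, and this is where the lemma enters. Let $x\in M\setminus X^T$ and set $b=\pi(x)$, so $b\notin B^T$. Equivariance of $\pi$ gives $T_x\subseteq T_b$, hence $\ft_x\subseteq\ft_b$. Now $T_b$ is an isotropy group of $B$, and since $T$ is connected the condition $b\notin B^T$ forces $\ft_b\subsetneq\ft$; thus in the lemma the multi-index attached to $T_b$ is nonempty, $\ft_b=\bigcap_{r=1}^m\mathfrak{k}_{i_r}$ with $m\geqslant 1$, and therefore $\ft_x\subseteq\ft_b\subseteq\mathfrak{k}_{i_1}$. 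Consequently
$$
\bigcup_{x\in M\setminus X^T}\ft_x\otimes\CC\subseteq\bigcup_{i=1}^N\mathfrak{k}_i\otimes\CC\;,
$$
and combining this inclusion with \cite[Theorem 11.4.1]{GS} gives the theorem. (If $B=B^T$, then $M\setminus X^T=\emptyset$ and there is nothing to prove.)

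The step I expect to demand the most care --- as opposed to the two essentially formal steps above --- is the verification that \cite[Theorem 11.4.1]{GS} is legitimately applicable to the \emph{noncompact} space $M\setminus X^T$: one must be sure its finiteness hypotheses (finitely many orbit types, finite generation of $H_T^*$ over $\SS(\gd)$) are in force, which is why I would record the homotopy-finiteness of $M\setminus X^T$ explicitly, and one must pin down the precise form of the support estimate being quoted. Everything else is routine: equivariance of $\pi$ transports stabilizer inclusions from $M$ down to the base $B$, and the lemma is exactly the tool that rewrites a stabilizer subgroup of $B$ as an intersection of the codimension-one isotropy groups $K_i$, placing its Lie algebra inside the set \eqref{eq:2.1}.
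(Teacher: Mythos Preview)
Your proposal is correct and follows exactly the route the paper takes: the paper's entire argument is the single sentence ``from the lemma and \cite[Theorem 11.4.1]{GS} one gets the following,'' and you have simply unpacked what that sentence means, supplying the verification that stabilizers on $M\setminus X^T$ push down via $\pi$ to proper isotropy groups of $B$ and hence, by the lemma, sit inside some $\mathfrak{k}_i$. The extra care you take in checking the finiteness hypotheses of \cite[Theorem 11.4.1]{GS} for the noncompact space $M\setminus X^T$ is a welcome addition that the paper leaves implicit.
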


By \cite[Section 11.3]{GS} there is an exact sequence
\begin{equation}\label{eq:2.2}
H_T^k(M \setminus X^T)_c \longrightarrow H_T^k(M)\stackrel{i^*}{\longrightarrow}
H_T^k(X^T)\longrightarrow H_T^{k+1}(M\setminus X^T)_c
\end{equation}
where $H_T^*(\;\cdot\;)_c$ denotes the equivariant cohomology with compact supports.
Therefore since $H_T^*(M)$ is a free $\SS(\gd)-$module Theorem \ref{support} implies
the following theorem.
\begin{theorem}\label{ker and coker}
The map $i^*$ is injective and $\mbox{coker}(i^*)$ is supported on
$\displaystyle\bigcup_{i=1}^N \mathfrak{k}_i\otimes \CC$.
\end{theorem}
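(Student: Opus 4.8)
The plan is to deduce both assertions formally from the exact sequence~\eqref{eq:2.2}, the freeness of $H_T^*(M)$, and Theorem~\ref{support}, using only standard commutative algebra over the polynomial ring $\SS(\gd)$. The facts I would use are: for a finitely generated $\SS(\gd)$-module $A$, the support $\mathrm{supp}\,A$ can only shrink when one passes to a submodule or to a quotient of $A$ (in either case the annihilator ideal can only grow); a nonzero \emph{torsion-free} module has zero annihilator and hence full support $\g\otimes\CC$; and the set~\eqref{eq:2.1} is a \emph{proper} Zariski-closed subset of $\g\otimes\CC$, being a finite union of the hyperplanes $\fk_i\otimes\CC$. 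I would also note that all the modules appearing in~\eqref{eq:2.2} are finitely generated: $M$ and $X^T=\pi^{-1}(B^T)$ are compact, and in each degree $H_T^k(M\setminus X^T)_c$ is an extension of a submodule of $H_T^k(M)$ by a quotient of $H_T^{k-1}(X^T)$, hence finitely generated since $\SS(\gd)$ is Noetherian.

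\emph{Injectivity.} By exactness of~\eqref{eq:2.2} at $H_T^k(M)$, the module $\ker i^*$ is a quotient of $H_T^k(M\setminus X^T)_c$. The support estimate of Theorem~\ref{support} applies equally to the compactly supported cohomology $H_T^*(M\setminus X^T)_c$ --- the localization argument of \cite[Section 11.3]{GS} used to prove Theorem~\ref{support} works verbatim with compact supports --- so $\mathrm{supp}(\ker i^*)$ is contained in the proper subvariety~\eqref{eq:2.1}. On the other hand $\ker i^*$ is a submodule of the free $\SS(\gd)$-module $H_T^k(M)$, hence torsion-free, and a nonzero torsion-free module has full support. Therefore $\ker i^*=0$.

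\emph{Cokernel.} Since $i^*$ is now injective, exactness of~\eqref{eq:2.2} at $H_T^k(X^T)$ identifies $\mathrm{coker}(i^*)$ in degree $k$ with the image of the connecting homomorphism $H_T^k(X^T)\to H_T^{k+1}(M\setminus X^T)_c$, i.e.\ with a submodule of $H_T^{k+1}(M\setminus X^T)_c$. Since support is monotone under submodules, $\mathrm{coker}(i^*)$ is supported on~\eqref{eq:2.1}.

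\emph{Main point.} The one step that is not pure bookkeeping is the assertion that $H_T^*(M\setminus X^T)_c$ has support contained in~\eqref{eq:2.1}: Theorem~\ref{support} as stated concerns ordinary equivariant cohomology, so one must check that passing to compact supports does not enlarge the support. I expect this to be the place requiring care; it holds because the isotropy computation behind Theorem~\ref{support} --- namely that $(M\setminus X^T)^P=\emptyset$ for every subtorus $P$ not contained in any $K_i$, which in turn rests on the Lemma --- feeds into the abstract localization theorem of \cite[Section 11.3]{GS} for $H_{T,c}^*$ exactly as it does for $H_T^*$. Granting that, the theorem is immediate from the long exact sequence together with the facts that over a polynomial ring a nonzero submodule of a free module has full support, whereas a finite union of hyperplanes does not.
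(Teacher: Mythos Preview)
Your argument is correct and follows the same route as the paper, which simply states that the theorem follows from the exact sequence~\eqref{eq:2.2}, the freeness of $H_T^*(M)$, and Theorem~\ref{support}; you have merely unpacked the commutative-algebra steps that the paper leaves implicit. Your observation that Theorem~\ref{support} must be invoked for $H_T^*(M\setminus X^T)_c$ rather than $H_T^*(M\setminus X^T)$ is well taken---the paper elides this, but as you note the localization argument of \cite[Section~11.3--11.4]{GS} applies verbatim with compact supports.
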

As a consequence we get the following corollary.
\begin{corollary}\label{cor:2.4}
If $e$ is an element of $H_T^*(X^T)$, there exist non-zero weights
$\alpha_1,\ldots,\alpha_r$ such that $\alpha_i=0$ on some $\mathfrak{k}_j$ and
\begin{equation}\label{eq:2.3}
\alpha_1\cdots\alpha_r e \in i^*(H_T^*(M))
\end{equation}
\end{corollary}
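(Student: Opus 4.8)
The plan is to extract the corollary from Theorem~\ref{ker and coker} by an elementary commutative-algebra argument: a finitely generated module supported on a union of hyperplanes is annihilated by a power of the product of the corresponding linear forms.

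First I would set $C = \mathrm{coker}(i^*) = H_T^*(X^T)/i^*(H_T^*(M))$. Since $X^T = \pi^{-1}(B^T)$ is a compact $T$-manifold (a finite disjoint union of the fibers $F_p$, $p \in B^T$, because $B^T$ is finite), $H_T^*(X^T)$ is a finitely generated $\SS(\gd)$-module, hence so is $C$, and Theorem~\ref{ker and coker} says that the zero set $V(I_C)$ of its annihilator ideal $I_C$ is contained in $Z := \bigcup_{i=1}^N \mathfrak{k}_i \otimes \CC$. Let $\beta_1, \ldots, \beta_s \in \gd$ be nonzero weights cutting out the distinct hyperplanes among $\mathfrak{k}_1, \ldots, \mathfrak{k}_N$ (each $\beta_j$ well-defined up to sign, since the $\mathfrak{k}_i$ are rational hyperplanes), and put $\beta = \beta_1 \cdots \beta_s$. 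Then $Z = V(\beta)$, and since $\beta$ is a squarefree product of linear forms, the Nullstellensatz gives $I(Z) = (\beta)$. From $V(I_C) \subseteq Z = V(\beta)$ we obtain $(\beta) = I(Z) \subseteq \sqrt{I_C}$, so $\beta^m \in I_C$ for some $m \geqslant 1$; equivalently
\[
\beta^m \cdot H_T^*(X^T) \subseteq i^*(H_T^*(M)).
\]

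Now, given $e \in H_T^*(X^T)$, it suffices to take for $\alpha_1, \ldots, \alpha_r$ (with $r = ms$) the list consisting of $m$ copies of each of $\beta_1, \ldots, \beta_s$: each $\alpha_k$ is a nonzero weight vanishing on one of the $\mathfrak{k}_j$, and $\alpha_1 \cdots \alpha_r\, e = \beta^m e \in i^*(H_T^*(M))$ by the displayed inclusion. I do not expect any real obstacle here: the entire force of the corollary is supplied by Theorem~\ref{ker and coker}, and what remains — finite generation of $H_T^*(X^T)$ over $\SS(\gd)$, and the fact that the vanishing ideal of a finite union of distinct hyperplanes is generated by the product of their defining weights — is standard. (If one prefers not to invoke a uniform $m$, one may instead argue $e$-by-$e$: the image of $e$ in $C$ is killed by a power of $\beta$ because $\beta \in \sqrt{I_C}$, which yields the same conclusion with $r$ depending on $e$.)
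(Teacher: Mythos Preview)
Your argument is correct and is exactly the standard commutative-algebra unpacking that the paper leaves implicit: in the paper, Corollary~\ref{cor:2.4} is stated without proof, simply ``as a consequence'' of Theorem~\ref{ker and coker}. Your finite-generation check on $H_T^*(X^T)$ and the Nullstellensatz step make explicit why support on a finite union of rational hyperplanes forces annihilation by a power of the product of their defining weights, which is precisely what the corollary asserts.
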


The next theorem is a fiber bundle version of the Chang-Skjelbred theorem.

\begin{theorem}\label{thm:CSt}
The image of $i^*$ is the ring
\begin{equation}\label{eq:2.4}
\bigcap_{i=1}^N i^*_{K_i}H_T^*(X^{K_i})
\end{equation}
where $i_{K_i}$ denotes the inclusion of $X^T$ into $X^{K_i}$.
\end{theorem}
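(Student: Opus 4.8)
The inclusion $i^*(H_T^*(M))\subseteq\bigcap_{i=1}^N i^*_{K_i}H_T^*(X^{K_i})$ is immediate from functoriality: writing $\rho_i\colon H_T^*(M)\to H_T^*(X^{K_i})$ for the restriction map, one has $i^*=i^*_{K_i}\circ\rho_i$, so the image of $i^*$ lies in the image of $i^*_{K_i}$ for every $i$. The content of the theorem is the reverse inclusion, and the plan is to reduce it, via Corollary~\ref{cor:2.4}, to a one-prime-at-a-time assertion and then settle that assertion by a localization argument.

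So suppose $c\in\bigcap_i i^*_{K_i}H_T^*(X^{K_i})$. By Corollary~\ref{cor:2.4} there are nonzero weights $\alpha_1,\dots,\alpha_r$, each vanishing on some $\fk_j$, with $\alpha_1\cdots\alpha_r\,c\in i^*(H_T^*(M))$. Since $\bigcap_i i^*_{K_i}H_T^*(X^{K_i})$ is an $\SS(\gd)$-submodule of $H_T^*(X^T)$, it is enough to prove the following one-weight statement: if $\alpha$ is a weight with $\alpha|_{\fk_{i_0}}=0$, if $c$ lies in the above intersection, and if $\alpha c\in i^*(H_T^*(M))$, then already $c\in i^*(H_T^*(M))$. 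Granting this, one applies it with $\alpha=\alpha_1$ to $\alpha_2\cdots\alpha_r\,c$ (still in the intersection) and iterates, stripping off the weights one at a time.

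To prove the one-weight statement I would localize at the height-one prime $\fp=(\alpha)\subset\SS(\gd)$, whose zero set is the hyperplane $\fk_{i_0}\otimes\CC$. The map $\rho_{i_0}\colon H_T^*(M)\to H_T^*(X^{K_{i_0}})$ sits, by the same reasoning that produced \eqref{eq:2.2}, in an exact sequence with connecting terms $H_T^*(M\setminus X^{K_{i_0}})_c$. Now $M\setminus X^{K_{i_0}}=\pi^{-1}(B\setminus B^{K_{i_0}})$ has no $T$-fixed points (as $M^T\subseteq X^T\subseteq X^{K_{i_0}}$), and, by the Lemma and arguing exactly as for Theorem~\ref{support}, all of its codimension-one isotropy groups occur among the $K_j$ with $\fk_j\ne\fk_{i_0}$; hence $\mathrm{coker}(\rho_{i_0})$ is supported on $\bigcup_{\fk_j\ne\fk_{i_0}}\fk_j\otimes\CC$, a finite union of hyperplanes none of which contains the generic point of $\fk_{i_0}\otimes\CC$. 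Therefore $(\mathrm{coker}\,\rho_{i_0})_\fp=0$, i.e. $\rho_{i_0}$ becomes surjective after localizing at $\fp$, whence $(i^*(H_T^*(M)))_\fp=(i^*_{K_{i_0}}H_T^*(X^{K_{i_0}}))_\fp$. Since $c\in i^*_{K_{i_0}}H_T^*(X^{K_{i_0}})$, the class $\bar c$ of $c$ in $Q:=\mathrm{coker}(i^*)$ dies after localizing at $\fp$; together with $\alpha\bar c=0$ this shows $\bar c$ is annihilated by an ideal of height $\ge 2$, so $\bar c$ is supported in codimension $\ge 2$.

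It remains to conclude $\bar c=0$, and this is the step I expect to be the main obstacle. In the classical Chang--Skjelbred situation $X^T$ equals the honest fixed-point set $M^T$, which carries the trivial $T$-action, so $H_T^*(X^T)=H^*(X^T)\otimes\SS(\gd)$ is free; then $0\to H_T^*(M)\xrightarrow{i^*}H_T^*(X^T)\to Q\to 0$ is a two-term free resolution, so $Q$ has projective dimension $\le 1$, hence is Cohen--Macaulay of pure codimension one and has no associated prime of codimension $\ge 2$, forcing $\bar c=0$ at once. Here $X^T=\bigsqcup_{p\in B^T}F_p$ with the $T$-action on the fibers $F_p$ in general nontrivial, so $H_T^*(X^T)$ need not be free and this shortcut is unavailable. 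Instead I would prove directly that $H_T^*(M\setminus X^T)_c$—into which $Q$ embeds by \eqref{eq:2.2}—has no nonzero submodule supported in codimension $\ge 2$: filtering $M\setminus X^T$ by $M_p=\pi^{-1}(B_p)$, preimages of the orbit-type skeleton of the GKM space $B$, the only codimension-$\ge 2$ stratum is $M\setminus M_1=\pi^{-1}(B\setminus B_1)$, its complement $M_1\setminus X^T=\bigsqcup_i(X^{K_i}\setminus X^T)$ has compactly supported equivariant cohomology a direct sum of modules each supported on a single hyperplane $\fk_i\otimes\CC$, and the associated long exact sequence, analyzed as in the Atiyah--Bredon argument but now for this pulled-back filtration and using crucially that $H_T^*(M)$ is free, shows that no codimension-$\ge 2$ associated primes survive in the relevant degrees. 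This is precisely where equivariant formality of the total space is used, and it is the essential new ingredient beyond the classical proof.
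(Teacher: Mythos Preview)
Your localization framework is sound up through the point where you show that $\bar c\in Q=\mathrm{coker}(i^*)$ is annihilated by an ideal of height $\ge 2$. The difficulty is exactly where you flag it: concluding $\bar c=0$ requires that $Q$ have no associated primes of height $\ge 2$, and your sketch for this---pulling back the orbit-type filtration of $B$ and invoking an Atiyah--Bredon style exactness---is not a proof. The long exact sequence you write down mixes modules supported on single hyperplanes with modules supported in higher codimension, and freeness of $H_T^*(M)$ alone does not obviously prevent the connecting maps from creating embedded primes in $H_{T,c}^*(M\setminus X^T)$. Making this precise would take real work, essentially a relative Atiyah--Bredon theorem for the pulled-back filtration.

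The paper sidesteps this entirely with a much more elementary device. It never analyzes $Q$; instead it fixes once and for all a free $\SS(\gd)$-basis $e_1,\dots,e_m$ of $H_T^*(M)$ (this is where equivariant formality enters), and for any $e\in H_T^*(X^T)$ uses Corollary~\ref{cor:2.4} to write
\[
e=\sum_i \frac{g_i}{p_i}\,e_i
\]
uniquely in the localization, with each $p_i$ a product of a subset of the weights $\alpha_1,\dots,\alpha_r$ and $\gcd(g_i,p_i)=1$. Now for each codimension-one isotropy group $K$ with Lie algebra $\fk$, the cokernel of $H_T^*(M)\to H_T^*(X^K)$ is supported on $\bigcup_{\fk_j\ne\fk}\fk_j\otimes\CC$, so if $e\in i_K^*H_T^*(X^K)$ there exist weights $\beta_1,\dots,\beta_s$, none vanishing on $\fk$, with $\beta_1\cdots\beta_s e\in i^*H_T^*(M)$. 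Comparing the two expressions and using $\gcd(g_i,p_i)=1$ forces each $p_i$ to divide $\beta_1\cdots\beta_s$; hence no factor of $p_i$ vanishes on $\fk$. Running this over all $K_j$ shows no factor of $p_i$ vanishes on any $\fk_j$, while every $\alpha_k$ does vanish on some $\fk_j$; so $p_i=1$ and $e\in i^*H_T^*(M)$.

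The point is that the free basis of $H_T^*(M)$ lets one track denominators \emph{inside the image}, so one never has to control embedded primes of the cokernel or worry about whether $H_T^*(X^T)$ is free. Your approach could perhaps be completed, but as written the last paragraph is a gap, and the paper's argument is both shorter and requires no structural information about $H_T^*(X^T)$ or $Q$ beyond the support statement you already have.
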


\begin{proof}
Via the inclusion $i^*$ we can view $H_T^*(M)$ as a submodule of $H_T^*(X^T)$.
Let $e_1,\ldots,e_m$ be a basis of $H_T^*(M)$ as a free module
over $\SS(\gd)$. Then by Corollary~\ref{cor:2.4}  for any $e\in H_T^*(X^T)$
we have $$
\alpha_1\cdots\alpha_r e=\sum f_ie_i,\;f_i\in \SS(\gd)\;.
$$

Then $e=\sum\displaystyle\frac{f_i}{p}e_i$, where $p=\alpha_1\cdots\alpha_r$.
If $f_i$ and $p$ have a common factor we can eliminate it and write $e$ uniquely as
\begin{equation}\label{eq:2.5}
e=\sum \frac{g_i}{p_i}e_i
\end{equation}
with $g_i\in \SS(\gd)$, $p_i$ a product of a subset of the weights
$\alpha_1,\ldots,\alpha_r$ and $p_i$ and $g_i$ relatively prime.

Now suppose that $K$ is an isotropy subgroup of $B$ of codimension one and $e$
is in the image of $H^*_T(X^K)$. By \cite[Theorem 11.4.2]{GS} the cokernel
of the map $H^*_T(M)\rightarrow H_T^*(X^K)$ is supported on the subset
$\cup \mathfrak{k}_i\otimes \CC,\;\mathfrak{k}_i\neq \mathfrak{k}$ of \eqref{eq:2.1},
and hence there exists weights $\beta_1,\ldots,\beta_r$, $\beta_i$ vanishing
on some $\mathfrak{k}_j$ but not on $\mathfrak{k}$, such that
$$
\beta_i\cdots\beta_s e=\sum f_ie_i\;.
$$
Thus the $p_i$ in \eqref{eq:2.5}, which is a product of a subset of the
weights $\alpha_1,\ldots,\alpha_r$, is a product of a subset of weights
none of which vanish on $\mathfrak{k}$. Repeating this argument for all
the codimension one isotropy groups of $B$ we conclude that the weights in
this subset cannot vanish on any of these $\mathfrak{k}$'s, and hence is the
empty set, i.e. $p_i=1$. Then if $e$ is in the intersection \eqref{eq:2.4},
$e$ is in $H_T^*(M)$.
\end{proof}

\section{Fiber Bundles over GKM Spaces}
\label{sec:GKMFB}

Suppose now that $B=\CC P^1$. The action of $T$ on $B$ is effectively an
action of a quotient group, $T/T_e$, where $T_e$ is a codimension one
subgroup of $T$. Moreover $B^T$ consists of two points, $p_i,\;i=1,2,$
and $X^T$ consists of the two fibers $\pi^{-1}(p_i)=F_i$.
Let $T=T_e\times S^1$. Then $S^1$ acts freely on $\CC P^1\setminus \{p_1,p_2\}$
and the quotient by $S^1$ of this action is the interval $(0,1)$, so one
has an isomorphism of $T_e$ spaces
\begin{equation}\label{eq:2.6}
(M\setminus X^T)/S^1=F\times(0,1)\;,
\end{equation}
where, as $T_e-$spaces, $F=F_1=F_2$.

Consider now the long exact sequence \eqref{eq:2.2}. Since $i^*$ is injective
this becomes a short exact sequence
\begin{equation}\label{eq:2.7}
0\rightarrow H_T^k(M)\rightarrow H_T^k(X^T)\rightarrow H_T^{k+1}(M\setminus X^T)_c\rightarrow0\;.
\end{equation}
Since $S^1$ acts freely on $M\setminus X^T$ we have $$
H_T^{k+1}(M\setminus X^T)_c=H_{T_1}^{k+1}((M\setminus X^T)/S^1)_c
$$
and by fiber integration one gets from \eqref{eq:2.6}
$$
H_{T_e}^{k+1}((M\setminus X^T)/S^1)_c=H_{T_e}^k(F)\;.
$$
Therefore, denoting by $r$ the forgetful map $H_T(F_i) \to H_{T_e}(F)$,
the sequence \eqref{eq:2.7} becomes
\begin{equation}\label{eq:2.8}
0\rightarrow H_T^k(M) \rightarrow H_T^k(F_1)\oplus H_T^k(F_2) \rightarrow H_{T_e}^k(F)\rightarrow 0\; ,
\end{equation}
where the second arrow is the map
$$H_T(X^T) = H_T(F_1) \oplus H_T(F_2) \to H_{T_e}(F)$$
sending $f_1\oplus f_2$ to $-r(f_1) + r(f_2)$. (The $-r$ in the first term
is due to the fact that the fiber integral
$$H_c^{k+1} (F \times (0,1)) \to H^k(F)$$
depends on the orientation of $(0,1)$: the standard orientation for
$F_2 \times (0,1) \to F_2$ and the reverse orientation for $F_1 \times (0,1) \to F_1$.) To summarize, we've proved the following theorem.

\begin{thm}\label{thm:7.6}
  For $T-$equivariant fiber bundles over $\CC P^1$, the image of the map
  $$0 \to H_T^*(M) \to H_T^*(X^T)$$
  is the set of pairs $(f_1,f_2) \in H_T^*(F_1) \oplus H_T^*(F_2)$ satisfying $r(f_1) = r(f_2)$.
\end{thm}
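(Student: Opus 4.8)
The plan is to derive Theorem~\ref{thm:7.6} as a direct consequence of the exact sequence~\eqref{eq:2.8} and a careful tracking of the maps involved. Since everything up through the identification of the third term of~\eqref{eq:2.7} with $H_{T_e}^k(F)$ has already been established in the running discussion, what remains is essentially to (i) confirm that the connecting map in~\eqref{eq:2.7}, after these identifications, really is the map $f_1 \oplus f_2 \mapsto -r(f_1) + r(f_2)$, and (ii) read off the image of $i^*$ as the kernel of that map.

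First I would set up the identifications cleanly: write $T = T_e \times S^1$, recall from~\eqref{eq:2.6} that $(M \setminus X^T)/S^1 \cong F \times (0,1)$ as $T_e$-spaces, and use the freeness of the $S^1$-action together with fiber integration to identify $H_T^{k+1}(M \setminus X^T)_c$ with $H_{T_e}^k(F)$. Then I would restrict the long exact sequence~\eqref{eq:2.2} to $B = \CC P^1$; by Theorem~\ref{ker and coker} (equivalently, Theorem~\ref{support}) the map $i^*$ is injective, so~\eqref{eq:2.2} collapses to the short exact sequence~\eqref{eq:2.7}. The key point is to compute the third arrow in~\eqref{eq:2.7} explicitly. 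Here $X^T = F_1 \sqcup F_2$, and a compactly supported class on $M \setminus X^T \cong F \times (0,1)$ restricts near the two ends to classes pulled back from $F_1$ and $F_2$ respectively; the connecting homomorphism records the ``defect'' between these two boundary values. After fiber-integrating along $(0,1)$, this defect becomes $r(f_2) - r(f_1)$, with the sign on the $F_1$-term coming from the fact that the orientation of $(0,1)$ induced at the $p_1$-end is opposite to that at the $p_2$-end — exactly the orientation bookkeeping flagged in the paragraph preceding the theorem. This is the step I expect to be the main obstacle: making the sign and the direction of the connecting map precise requires care with the Mayer--Vietoris/excision setup for compactly supported equivariant cohomology and with the conventions for fiber integration, and it is easy to get a spurious sign.

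Once the third arrow is pinned down as $(f_1, f_2) \mapsto -r(f_1) + r(f_2)$, exactness of~\eqref{eq:2.8} gives that the image of $i^* \colon H_T^*(M) \to H_T^*(X^T) = H_T^*(F_1) \oplus H_T^*(F_2)$ equals the kernel of this map, which is precisely $\{(f_1, f_2) : r(f_1) = r(f_2)\}$. Surjectivity onto this kernel is automatic from exactness — there is nothing further to check — so the theorem follows. I would close by remarking that $r$ is the forgetful (restriction) map $H_T^*(F_i) \to H_{T_e}^*(F)$ induced by $T_e \hookrightarrow T$ combined with the $T_e$-equivariant identification $F \cong F_i$, so that the condition $r(f_1) = r(f_2)$ is exactly the statement that $f_1$ and $f_2$ agree after restricting to the common fiber as $T_e$-spaces — the prototype of the edge compatibility condition~\eqref{eq:new1.7} that feeds into Theorem~\ref{th:CS-bundles}.
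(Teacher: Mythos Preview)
Your proposal is correct and follows exactly the paper's approach: the paper's proof consists precisely of the discussion preceding the theorem (the reduction of~\eqref{eq:2.2} to the short exact sequence~\eqref{eq:2.7}, the identification of the third term via~\eqref{eq:2.6} and fiber integration, and the orientation argument for the sign in the map $(f_1,f_2)\mapsto -r(f_1)+r(f_2)$), with the theorem stated as a summary. Your identification of the sign computation as the delicate point matches the paper's own emphasis.
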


Theorem~\ref{th:CS-bundles} follows from Theorem~\ref{thm:CSt} by applying
Theorem~\ref{thm:7.6} to all edges of the GKM graph of $B$.

\section{Homogeneous Fibrations}\label{sec:4}

Let $G$ be a compact connected semisimple Lie group, $T$ its Cartan subgroup, 
and $K$ a closed subgroup of $G$ containing $T$. Then, as asserted above, $G/K$ 
is a GKM space. The proof of this consists essentially of describing explicitly 
the GKM structure of $G/K$ in terms of the Weyl groups of $G$ and $K$. 
We first note that for $K=T$, i.e.  for the generalized flag variety $M=G/T$, 
the fixed point set, $M^T$, is just the orbit of $N(T)$ through the identity 
coset, $p_0$, and hence $M^T$ can be identified with $N(T)/T = W_G$. To show 
that $M$ is $GKM$ it suffices to check the GKM condition $p_0$. To do so we 
identify the tangent space $T_{p_0}M$ with $\fg/\ft$ and identify $\fg/\ft$ 
with the sum of the positive root spaces
\begin{equation}
\fg/\ft = \bigoplus_{\alpha \in \Delta_G^+} \fg_{\alpha}, \; ,
\end{equation}
the $\alpha'$s being the weights of the isotropy representation of $T$ 
on $\fg/\ft$. It then follows from a standard theorem in Lie theory 
that the weights are pairwise 
independent and this in turn implies ``GKM-ness'' at $p_0$.

To see what the edges of the GKM graph are at $p_0$ let 
$\chi_{\alpha} \colon T \to S^1$ be the character homomorphism
$$\chi_{\alpha}(t) = \exp{ i\alpha(t)} \; ,$$
let $H_{\alpha}$ be its kernel, and $G_{\alpha}$ the semisimple 
component of the centralizer $C(H_{\alpha})$ of $H_{\alpha}$ in $G$. Then 
$G_{\alpha}$ is either $SU(2)$ or $SO(3)$, and in either case 
$G_{\alpha} p_0 \simeq \CC P^1$.  However since $G_{\alpha}$ centralizes $H_{\alpha}$,
$G_{\alpha}p_0$ is $H_{\alpha}-$fixed and hence is the connected component of $M^{H_{\alpha}}$ containing $p_0$. Thus the oriented edges of the GKM graph of $M$ with initial point $p_0$
can be identified with the elements of $\Delta_G^+$ and the axial function becomes the function which labels by $\alpha$ the oriented edge $G_{\alpha}p_0$.
Moreover, under the identification $M^T=W_G$, the vertices that are joined to $p_0$
by these edges are of the form $\sigma_{\alpha}p_0$, where $\sigma_{\alpha}\in W_G$
is the reflection which leaves fixed the hyperplane $\ker \alpha \subseteq \ft$
and maps $\alpha$ to $-\alpha$.

Letting $a\in N(T)$ and letting $p=ap_0$ one gets essentially the same description of the
GKM graph at $p$. Namely, denoting this graph by $\Gamma$, the following are true.
\begin{enumerate}
\item The maps, $a\in N(T)\rightarrow ap_0$ and $a\in N(T)\to w\in N(T)/T$, set up
a one-one correspondence between the vertices, $M^T$, of $\Gamma$ and the elements of
$W_G$;
\item Two vertices, $w$ and $w'$, are on a common edge if and only if $w'=w\sigma_{\alpha}$
for some $\alpha\in \Delta_G^+$;
\item The edges of $\Gamma$ containing $p=ap_0$ are in one-one correspondence with elements of $\Delta_G^+$;
\item For $\alpha\in \Delta_G^+$ the stabilizer group of the edge corresponding to $\alpha$
is $aH_{\alpha}a^{-1}$.
\end{enumerate}
Via the fibration $G/T \to G/K$ one gets essentially the same picture for $G/K$.
Namely let $\Delta_{G,K}^+=\Delta_G^+\setminus \Delta_K^+$. Then one has (see \cite{GHZ}, Theorem 2.4)
\begin{theorem}
$G/K$ is a GKM space with GKM graph $\Gamma$, where
\begin{enumerate}
\item The vertices of $\Gamma$ are in one-one correspondence with the elements of
$W_G/W_K$;
\item Two vertices $[w]$ and $[w']$ are on a common edge if and only if $[w']=[w\sigma_{\alpha}]$
for some $\alpha \in \Delta_{G,K}^+$;
\item The edges of $\Gamma$ containing the vertex $[w]$ are in one-one correspondence with the roots in $\Delta_{G,K}^+$;
\item If $\alpha$ is such a root the the stabilizer group of the $\CC P^1$ corresponding to the edge
is $aH_{\alpha}a^{-1}$ where $a$ is a preimage in $N(T)$ of $w\in W_G$.
\end{enumerate}
\end{theorem}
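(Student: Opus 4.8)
The plan is to reduce the description of the GKM graph of $G/K$ to the already-established description of the GKM graph of $G/T$ via the $T$-equivariant fibration $p \colon G/T \to G/K$. First I would note that $p$ intertwines the $T$-actions and induces a surjection on fixed-point sets: indeed $(G/T)^T$ is identified with $W_G$, $(G/K)^T$ with $W_G/W_K$, and under these identifications $p$ is the quotient map $w \mapsto [w]$. (One must check that $(G/K)^T$ is exactly $W_G/W_K$; this follows because a point $gK$ is $T$-fixed iff $T \subset gKg^{-1}$, and two maximal tori of $K$ are conjugate in $K$, so $g$ can be taken in $N(T)$, with the ambiguity precisely $W_K$.) This gives statement (1).

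Next I would analyze the isotropy representation of $T$ on $T_{[w]}(G/K)$. It suffices by equivariance to do this at the identity coset $[p_0]$, where $T_{[p_0]}(G/K) = \fg/\fk$. Since $\fk \supseteq \ft$ is reductive, $\fk = \ft \oplus \bigoplus_{\alpha \in \Delta_K} \fg_\alpha$, and hence
\begin{equation}
\fg/\fk = \bigoplus_{\alpha \in \Delta_G^+ \setminus \Delta_K^+} \fg_\alpha = \bigoplus_{\alpha \in \Delta_{G,K}^+} \fg_\alpha \; .
\end{equation}
The weights $\{\alpha : \alpha \in \Delta_{G,K}^+\}$ are pairwise independent (being a subset of the pairwise-independent set $\Delta_G^+$), so $G/K$ is GKM at $[p_0]$, hence at every vertex by homogeneity. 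This gives (3): the oriented edges at $[p_0]$ are labeled by $\Delta_{G,K}^+$, and the axial function sends the edge to $\alpha$.

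To identify the edge two-spheres and the opposite vertices, for $\alpha \in \Delta_{G,K}^+$ I would take $G_\alpha$ (the $SU(2)$- or $SO(3)$-subgroup attached to $\alpha$ as in the $G/T$ discussion) and consider its orbit $G_\alpha [p_0] \subset G/K$. The map $G/T \to G/K$ carries the edge sphere $G_\alpha p_0 \cong \CC P^1$ of $G/T$ onto $G_\alpha[p_0]$; I would argue this is again a $\CC P^1$ (the stabilizer in $G_\alpha$ of $[p_0]$ contains the maximal torus of $G_\alpha$ but is not all of $G_\alpha$, since $\fg_\alpha \not\subset \fk$), that it is fixed by $H_\alpha$ since $G_\alpha$ centralizes $H_\alpha$, and hence that it is the connected component of $(G/K)^{H_\alpha}$ through $[p_0]$ — giving (4), with the stabilizer $aH_\alpha a^{-1}$ at the vertex $[w]=[ap_0]$ obtained by translating. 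Finally, the north/south poles of $G_\alpha[p_0]$ are the two $T$-fixed points $[p_0]$ and $\sigma_\alpha[p_0] = [\sigma_\alpha p_0]$, the images of the poles $p_0, \sigma_\alpha p_0$ of the corresponding sphere in $G/T$; translating by $a$ gives that the vertex joined to $[w]$ along the $\alpha$-edge is $[w\sigma_\alpha]$, which is statement (2). I expect the main obstacle to be the bookkeeping in (2) and (4): one must check that distinct roots in $\Delta_{G,K}^+$ can give the same edge of the \emph{quotient} graph (this happens exactly when the corresponding $W_K$-coset relations collapse them), so that the edge set at $[w]$ is genuinely in bijection with $\Delta_{G,K}^+$ only after accounting for the $W_K$-action — equivalently, one must verify that the roots in $\Delta_K^+$ are precisely the ones whose edges become loops at the vertex $[w]$, which is why they are excluded. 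All of this is essentially the content of \cite[Theorem 2.4]{GHZ}, so I would either cite that directly or reproduce the short argument above.
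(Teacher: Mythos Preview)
Your approach is correct and matches the paper's: the paper simply states that the description follows from the fibration $G/T \to G/K$ and cites \cite[Theorem~2.4]{GHZ}, and your proposal is essentially an expanded sketch of that same argument. One small clarification on your ``main obstacle'': the edges at $[w]$ really are in bijection with $\Delta_{G,K}^+$ without any further quotienting---the $W_K$-action only causes distinct edges to share the same endpoints (so the graph may fail to be simple, as the paper remarks immediately after the theorem), while the roots in $\Delta_K^+$ are excluded precisely because their spheres collapse to the point $[p_0]$ in $G/K$.
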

\begin{remark}
The GKM graph that we have just described is not simple in general, i.e. will in general have more than one edge joining two adjacent vertices. There is, however, a simple sufficient condition for simplicity.
\end{remark}
\begin{theorem}
If $K$ is a stabilizer group of an element of $\ft^*$, i.e. if $G/K$ is a coadjoint orbit, then the
graph we've constructed above is simple.
\end{theorem}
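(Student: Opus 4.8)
The plan is to read off simplicity directly from the combinatorial description of the GKM graph $\Gamma$ just recalled. By that description the edge through $[w]$ carrying the label $\alpha \in \Delta_{G,K}^+$ has the two vertices $[w]$ and $[w\sigma_\alpha]$. Consequently, for $\alpha,\beta \in \Delta_{G,K}^+$ the $\alpha$-edge and the $\beta$-edge at $[w]$ have the same second endpoint if and only if $[w\sigma_\alpha]=[w\sigma_\beta]$ in $W_G/W_K$, i.e. if and only if $\sigma_\beta\sigma_\alpha \in W_K$ --- a condition that does not involve $w$. Moreover no edge is a loop, since $\sigma_\alpha \notin W_K$ whenever $\alpha \notin \Delta_K$. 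Hence $\Gamma$ is simple precisely when the map $\alpha \mapsto \sigma_\alpha W_K$ is injective on $\Delta_{G,K}^+$, and it is this injectivity that I will establish.

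The next step is to convert the hypothesis that $G/K$ is a coadjoint orbit into root data. Write $K=G_\xi$ for the $\operatorname{Ad}^*$-stabilizer of a point $\xi \in \ft^*$; such a stabilizer is connected and contains $T$, so $W_K=N_K(T)/T$ is defined. Two standard facts will be used: first, $W_K=\{w\in W_G : w\xi=\xi\}$ is the isotropy subgroup of $\xi$ in $W_G$; second, $\Delta_K=\{\gamma\in\Delta_G : \langle\gamma,\xi\rangle=0\}$, the roots orthogonal to $\xi$ with respect to the $W_G$-invariant inner product used to identify $\ft$ with $\ft^*$. In particular, $\alpha\in\Delta_{G,K}^+$ exactly when $\langle\xi,\alpha^\vee\rangle\neq 0$.

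Granting this, the injectivity is a two-line computation: if $\alpha,\beta\in\Delta_{G,K}^+$ and $\sigma_\beta\sigma_\alpha\in W_K$, then $\sigma_\beta\sigma_\alpha$ fixes $\xi$, hence $\sigma_\alpha\xi=\sigma_\beta\xi$; substituting $\sigma_\gamma\xi=\xi-\langle\xi,\gamma^\vee\rangle\gamma$ gives $\langle\xi,\alpha^\vee\rangle\,\alpha=\langle\xi,\beta^\vee\rangle\,\beta$. Since both coefficients are non-zero, $\beta$ is a non-zero scalar multiple of the root $\alpha$; because the root system of a compact semisimple group is reduced and $\alpha,\beta$ are both positive, this forces $\alpha=\beta$. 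Therefore $\alpha\mapsto\sigma_\alpha W_K$ is injective and $\Gamma$ has no multiple edges.

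The one place requiring care is the input of the previous paragraph --- that for $K=G_\xi$ the Weyl group and the root system are respectively the $W_G$-stabilizer of $\xi$ and the set of roots vanishing on $\xi$. This is classical: identifying $\fg^*$ with $\fg$ by the Killing form turns $G_\xi$ into the centralizer $C_G(H_\xi)$ of the corresponding element $H_\xi\in\ft$, whose complexified Lie algebra is $\ft_{\CC}\oplus\bigoplus_{\gamma(H_\xi)=0}\fg_\gamma$; the statement about $W_K$ then follows from the usual description of normalizers of maximal tori in such centralizers. I do not expect a genuine obstacle here --- only the bookkeeping of ensuring that $\xi$ is chosen so that its stabilizer is exactly $K$, which is automatic once $G/K$ is presented as a coadjoint orbit.
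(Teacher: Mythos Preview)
The paper states this theorem without proof, so there is no ``paper's own proof'' to compare against. Your argument is correct and is the natural one: reduce simplicity to the injectivity of $\alpha\mapsto\sigma_\alpha W_K$ on $\Delta_{G,K}^+$, use that for $K=G_\xi$ one has $W_K=\{w:w\xi=\xi\}$ and $\Delta_K=\{\gamma:\langle\gamma,\xi\rangle=0\}$, and then observe that $\sigma_\alpha\xi=\sigma_\beta\xi$ forces $\langle\xi,\alpha^\vee\rangle\alpha=\langle\xi,\beta^\vee\rangle\beta$ with nonzero coefficients, hence $\alpha=\beta$ by reducedness and positivity.

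One small remark: your sentence ``no edge is a loop, since $\sigma_\alpha\notin W_K$ whenever $\alpha\notin\Delta_K$'' is phrased as if it holds for arbitrary $K\supset T$, before you have invoked the coadjoint hypothesis. It does hold in that generality (the reflections contained in the Weyl group $W_K$ of a connected compact $K$ are exactly the $\sigma_\gamma$ with $\gamma\in\Delta_K$), but since you only need it for $K=G_\xi$, where it follows immediately from $W_K=\operatorname{Stab}_{W_G}(\xi)$, you might simply fold it into the coadjoint computation rather than assert it separately.
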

Now let $K_1$ be a closed subgroup of $K$ and consider the fibration 
\begin{equation}\label{eq:fibration}
G/K_1\to G/K.
\end{equation}
To show that this is balanced it suffices to show that it is balanced at the edges going out
of the identity coset, $p_0$. However, if $e$ is the edge joining $p_0$ to $\sigma_{\alpha}p_0$
and $a$ is the preimage of $\sigma_{\alpha}$ in $N(T)$ then conjugation by $a$ maps
the fiber, $F_{p_0}=K/K_0$ of \eqref{eq:fibration} at $p_0$ onto the fiber $F_p:=aK/aK_0$ of
$\eqref{eq:fibration}$ at $p=\sigma_{\alpha}p_0$, and conjugates the action of $\Gamma$
on $F_{p_0}$ to the twisted action, $aTa^{-1}$, of $T$ on $F_p$.
Moreover, since $a$ is in the centralizer of $H_{\alpha}$, this twisted action, restricted to
$H_{\alpha}$, coincides with the given action of $H_{\alpha}$, i.e. if $T_e=H_{\alpha}$, conjugation
by $a$ is a $T_e-$equivariant isomorphism of $F_{p_0} $ onto $F_p$. Hence the fibration 
\eqref{eq:fibration} is balanced.

\section{Holonomy for Balanced Bundles}\label{sec:5}
Let $M$ be a $T-$manifold and $\tau\colon T \to T$ an automorphism of $T$. We will begin
our derivation of $(1.11)$ by describing how the equivariant cohomology ring, $H_T(M)$
of $M$ is related to the ``$\tau-$twisted" equivariant cohomology ring $H_T(M)^{\tau}$, i.e. the cohomology ring associated with the action, $(g,m)\to \tau(g)m$.

The effect of this twisting is easiest to describe in terms of the Cartan model, $(\Omega_T(M),d_T)$.
We recall that in this model cochains are $T$ invariant polynomial maps
\begin{equation}\label{eq:5.1}
p\colon \mathfrak{t}\to \Omega(M),
\end{equation}
and the coboundary operator is given by
\begin{equation}\label{eq:5.2}
dp(\xi)=d(p(\xi))+\iota(\xi_M)p(\xi).
\end{equation}
``Twisting by $\tau$" is then given by the pull-back operation
$$
\tau^*\colon \Omega_T \to \Omega_T,\;\;\tau^*p(\xi)=p(\tau(\xi))
$$
 which converts $d_T$ to the coboundary operator
 \begin{equation}\label{eq:5.3}
 \tau^*d_T(\tau^{-1})^*=d+\iota(\tau^{-1}(\xi))=(d_T)^{\tau^{-1}},
 \end{equation}
 the expression on the right being the coboundary operator associated with the
 $\tau^{-1}$-twisted action of $T$ on $M$.
 
 Suppose now that $M$ and $N$ are $T-$manifolds and $f\colon M \to N$ a diffeomorphism
 which intertwines the $T-$action on $M$ with the $\tau-$twisted action on $N$.
 Then the pull-back map $f^*\colon \Omega(N)\to \Omega(M)$ satisfies
 $$
 \iota(\xi_M)f^*=f^*\iota(\tau(\xi)_N).
 $$
 Hence if we extend $f^*$ to the $\Omega_T$'s by setting $(f^*p)(\xi)=f^*(p(\xi))$
 this extended map satisfies
 \begin{equation}\label{eq:5.4}
 d_Tf^*=f^*(d_T)^{\tau}.
 \end{equation}
 Thus by \eqref{eq:5.3} and \eqref{eq:5.4} $\tau^*f^*$ intertwines the $d_T$ operators on
 $\Omega_T(N)$ and $\Omega_T(M)$ and hence defines an isomorphism on cohomology
 \begin{equation}\label{eq:5.5}
 \tau^{\#}f^{\#}\colon H_T(N)\to H_T(M).
 \end{equation}
 Moreover, for any diffeomorphism $f\colon M \to N$ (not just the $f$ above), the pull-back operation $(f^*p)(\xi)=f^*p(\xi)$ intertwines the $\tau^*$ operations, i.e. 
 \begin{equation}\label{eq:5.6}
 \tau^*f^*=f^*\tau^*.
 \end{equation}
 Another property of $f^*$ which we will need below if the following.
 If $T_e$ is a subgroup of $T$ one has restriction maps 
 $$
 \Omega_T\to \Omega_{T_e},\;\;p\to p_{|_{\mathfrak{t}_e}}
 $$
 and these induce maps in cohomology.
 If $\tau_{|T_e}$ is the identity it is easily checked that the diagram 
 $$
 \begin{array}{ccc}\label{eq:5.7}
 H_T(N) &\xrightarrow{\tau^{\#}f^{\#}} &H_T(M) \\
 \downarrow & & \downarrow \\
 H_{T_e}(N) & \xrightarrow{f^{\#}}& H_{T_e}(M)\\
 \end{array}
 $$
 commutes.
 
To apply these observations to the fibers of $(1.1)$  we begin by recalling a 
few elementary facts about holonomy. By equipping $M$ with a $T-$invariant 
Riemannian metric we get for each $m \in M$ an orthonormal complement in $T_mM$ 
of the tangent
space at $m$ to the fiber of $\pi$, \emph{i.e.} an ``Ehresman connection.'' 
Thus, if $p$ and $q$ are points of $B$ and $\gamma$ is a curve joining $p$ to $q$ we get, 
by parallel transport, a diffeomorphism $f_{\gamma} \colon F_p \to F_q$, where 
$f_{\gamma}(m)$ is the terminal point of the horizontal curve in $M$ projecting onto 
$\gamma$ and having $m$ as its initial point. Moreover, if $\gamma$ and $\gamma'$ 
are homotopic curves joining $p$ to $q$, then the diffeomorphisms 
$f_{\gamma}$ and $f_{\gamma'}$ are isotopic, \emph{i.e.} the isotopy class of $f_{\gamma}$ 
depends only on the homotopy class of $\gamma$.

Suppose now that the base $B$ is GKM, $p$ and $q$ are adjacent vertices of $\Gamma_B$, 
$e$ is the edge joining them and $S$ the two-sphere corresponding to this edge.
We can then choose $\gamma$ to be a longitudinal line on $S$ joining the South pole $p$ 
to the North pole $q$; since this line is unique up to homotopy, we get an intrinsically 
defined isotopy class of diffeomorphisms of $F_p$ onto $F_q$. Moreover since the Ehresman 
connection on $M$ is $T-$invariant and $T_e$ fixes $\gamma$, the maps in this istopy 
class are $T_e-$invariant. We will decree that the fibration (1.1) is \emph{balanced} 
if there exists a diffeomorphism $f_e$ in this isotopy class and an automorphism 
$\tau_e$ of $T$ such that $f_e$ intertwines the $T-$action on $F_p$ with 
the $\tau_e-$twisted action of $T$ on $F_q$. 

It is clear that this $\tau_e$, if it exists, has to be unique and has to restrict 
to the identity on $T_e$. Moreover, given a path
 $\gamma\colon e_1,\ldots,e_k$, in $\pi_1(\Gamma,p)$ we have for each $i$ a ring
 isomorphism 
 \begin{equation}\label{eq:5.7new}
 \tau_{e_i}^{\#}f_{e_i}^{\#}\colon H_T(F_{p_{i+1}})\to H_T(F_{p_i}),
 \end{equation}
 $p_i$ and $p_{i+1}$ being the initial and terminal vertices of $e_i$, and by composing
 these maps we get a ring automorphism, $\tau_{e_k}^{\#}f_{e_k}^{\#}\circ\cdots \circ\tau_{e_1}^{\#}f_{e_1}^{\#}$, of $H_T(F_p)$. Moreover, by \eqref{eq:5.6} we can rewrite the factors
 in this product as $\tau_{\gamma}^{\#}f_{\gamma}^{\#}$ where $\tau_{\gamma}=\tau_{e_1}\circ \cdots \circ \tau_{e_k}$ and $f_{\gamma}=f_{e_1}\circ \cdots \circ f_{e_k}$.
 Thus the map, $\gamma\to \tau_{\gamma}^{\#}f_{\gamma}^{\#}$ gives one a \textit{holonomy action} of $\pi_1(\Gamma,p)$ on $H_T(F_p)$. Alternatively letting $W_p$ be the image in 
 $Aut(F_p)\times Aut(T)$ of this map we can view this as a holonomy action of $W_p$ on $H_T(F_p)$.
 
 Now let $c_p$ be an element of $H_T(F_p)$ and $\gamma_{p,q}\colon e_1,\ldots, e_l$ a path
 in $\Gamma$ joining $p$ to $q$. Then one can parallel transport $c_p$ along $\gamma$ by
 the series of maps $\tau_{e_i}^{\#}f_{e_i}^{\#}$ to get an element $c_q$ in $H_T(F_q)$
 and if $c_p$ is in $H_T(F_p)^{W_p}$, this parallel transport operation doesn't depend
 on the choice of $\gamma$. Moreover if $q_1$ and $q_2$ are adjacent vertices and $e$
 is the edge joining $q_1$ to $q_2$, $c_{q_1}=\tau_e^*f_e^*c_{q_2}$ and hence by
 the commutative diagram above the images of $c_{q_1}$ and $c_{q_2}$ in the quotient
 space 
 $$
 H_T(F_{q_1})/\langle \alpha_e \rangle = H_T(F_{q_2})/\langle \alpha_e \rangle
 $$
 are the same. In other words by \eqref{eq:new1.7} the assignment, $q\in Vert(F)\to c_q$
 defines a cohomology class in $H_T(M)$ and thus gives us a map
 \begin{equation}\label{eq:5.8}
 H_T(F_p)^{W_p}\to H_T(M)\;.
 \end{equation}
 By tensoring this map with the map 
 $$
 H_{\alpha}(\Gamma) \xrightarrow{\sim}H_T(B)\xrightarrow{\pi^{\#}}H_T(M)
 $$
 we get a morphism of rings \eqref{eq:newmain}:
 $$
 H_T(F_p)^{W_p}\otimes _{\SS(\mathfrak{t}^*)^{W_p}}H_{\alpha}(\Gamma)\to H_T(M)\; .
 $$
 To prove that this map is injective we will assume henceforth that not only is $M$ 
equivariantly formal as a $T$ space but the $F_p$'s are as well. Apropos of this assumption 
we note:

\begin{itemize}
\item[(i)] Since the fibration, $M\to B$, is balanced, it suffices to assume this just 
for the ``base" fiber, $F_{p_0}$, above a single $p_0\in B^T$.
\item[(ii)] One consequence of this assumption is that the cohomology groups $H_T^k(F)$
and $H^k(F)$ are non-zero only in even dimensions. Hence, since we are also assuming that
this is the case for $H_T(B)$, the Serre-Leray spectral sequence associated with the fibration
$(1.2)$ has to collapse at its $E_2$ stage and hence the right and left hand sides of $(1.4)$
are isomorphic as $\SS(\ft^*)$ modules.
\item[(iii)] For the homogeneous fibrations in Section 4 this assumption is equivalent to the 
assumption that the $F_p$'s are GKM. To see this we note that if $G/K$ is equivariantly formal 
then $(G/K)^T$ has to be non-empty by \cite{GS} theorem $11.4.5$ and hence for some $g\in G$, 
$g^{-1}Tg\subseteq K$. In other words $K$ is of maximal rank and hence by the theorem in 
\cite{GHZ} that we cited above $G/K$ is GKM.
\item[(iv)] If $M$ is a Hamiltonian $T-$space, then the fibers are Hamiltonian spaces as well, 
hence are equivariantly formal. (Notice that in particular if $M$ is a Hamiltonian GKM space,
then the fibers are Hamiltonian GKM spaces.)
\item[(v)] One consequence of the fact that $H_T(F)$ is equivariantly formal as a 
module over $\SS(\ft^*)$ is that $H_T(F_p)^W$ is equivariantly formal as a module 
over $\SS(\ft^*)^W$. It is interesting to note that this property of $F$ is a 
consequence of the equivariant formality of $M$, \emph{i.e.}  doesn't require the
assumption that $F$ be equivariantly formal. Namely to prove that $H_T(F)^W$ is 
equivariantly formal one has to show that there are no torsion elements in $H_T(F)^W$:
if $0\neq p \in \SS(\ft^*)^W$ and $c \in H_T(F_p)^W$, then $pc=0$ implies $c=0$. Suppose 
this were not the case. Then the cohomology class $\tilde{c} \in H_T(M)$ obtained from $c$ 
by parallel transport would satisfy $p\tilde{c}=0$, contradicting the assumption that $M$ is equivariantly formal.
\end{itemize}
  
  We next note that, since $B$ is simply connected, the diffeomorphisms $f_{\gamma}\colon
 F_p \to F_p$ are (non-equivariantly) isotopic to the identity, so they act trivially on $H(F_p)$,
 and since $F_p$ is by assumption equivariantly formal,
 \begin{equation}\label{eq:5.9}
 H_T(F_p)\simeq H(F_p)\otimes_{\mathbb{C}}\SS(\mathfrak{t}^*)
 \end{equation}
 as an $\SS({\mathfrak{t}^*})$ module. Hence if one chooses elements $c_1(p),\ldots,c_N(p)$
 of $H_T(F_p)$ whose projections, $c_i$, in $H(F_p)=H_T(F_p)/\SS(\mathfrak{t}^*)$ are a
 basis of $H(F_p)$, these will be a free set of generators of $H_T(F_p)$ as a module over
 $\SS(\mathfrak{t}^*)$. Moreover, we can average these generators by the action of $W_p$ and
 by the remark above these averaged generators will have the same projections onto $H(F_p)$.
 Hence we can assume, without loss of generality, that the $c_i(p)$'s themselves are in 
 $H_T(F_p)^{W_p}$ and by \eqref{eq:5.9} generate $H_T(F_p)^{W_p}$ as a module over
 $\SS(\mathfrak{t}^*)^{W_p}$.
 
 If we parallel transport these generators to the fiber over $q$, we will get a set of generators,
 $c_1(q),\ldots,c_N(q)$ of $H_T(F_q)$, and the maps $q\to c(q)$ define, by Chang-Skjelbred,
 cohomology classes $\tilde{c}_k$ in $H_T(M)$.
 
 Consider the map
 \begin{equation}\label{eq:5.10}
 \sum (c_i\otimes f_i,q)\to \sum f_i(q)c_i(q)
 \end{equation}
 of $H(F_p)\otimes_{\mathbb{C}}H_{\alpha}(\Gamma)$ into $H_T(M)$. Since the $c_i(q)$'s
 are, for every $q\in Vert(\Gamma)$, a free set of generators of $H_T(F_q)$ as a module over
 $\SS(\mathfrak{t}^*)$,
 this map is an injection and hence so is the equivariant version of this map: $(1.11)$.
 To see that injectivity implies surjectivity we note that, if we keep track of bi-degrees, the map
 \eqref{eq:5.10} maps the space
 \begin{equation}\label{eq:5.11}
\bigoplus_{j+k=i}H^j(F_p)\otimes H_{\alpha}^k(\Gamma) 
 \end{equation}
 into $H_T^i(M)$. However, by assumption, the Serre-Leray spectral sequence associated
 with the fibration $\pi$ collapses at its $E_2$ stage. The $E_2$ term of this sequence is
 \begin{equation}\label{eq:5.12}
 H(F_p)\otimes_{\mathbb{C}}H_T(B)
 \end{equation}
 and the $E_{\infty}$ term is $H_T(M)$, so by \eqref{eq:5.11} and \eqref{eq:5.12} the space
 \eqref{eq:5.11} has the same dimension as $H_T^i(M)$ and hence \eqref{eq:5.10}
 is a bijective map of \eqref{eq:5.11} onto $H_T^i(M)$.
 
\section{Examples}\label{sec:6}

The results of this paper are closely related to the combinatorial
results of our recent article \cite{GSZ}.  More explicitly in \cite{GSZ} we develop a
GKM theory for fibrations in which the objects involved: the base, the
fiber and the total space of the fibration, are GKM graphs. We then
formulate, in this context, a combinatorial notion of ``balanced,'' show
that one has an analogue of the isomorphism (1.11) and  use this fact to
define some new combinatorial invariants for the classical flag varieties
of type $A_n$, $B_n$, $C_n$, and $D_n$. In this section we will give a brief
account of how these invariants can be defined geometrically by means of
the techniques developed above.

\begin{exm}{\rm Let $G=SU(n+1)$, $K=T$, the Cartan subgroup of diagonal matrices in $SU(n+1)$, and 
$K_1=S(S^1\times U(n))$. Then $G/K_1 \simeq \CC P^{n}$, the complex projective space. Let $\mathcal{A}_n=G/K$ be the generic coadjoint orbit of type $A_n$; then 
$\mathcal{A}_n \simeq \mathcal{F}l(\CC^{n+1})$, the variety of complete flags in $\CC^{n+1}$. 
The fibration
$$\pi \colon \mathcal{A}_n\simeq G/K \to G/K_1 \simeq \CC P^{n}$$
sends a flag $V_{\bullet}$ to its one-dimensional subspace $V_1$. 
The fiber over a line $L$ in $\CC P^{n}$ 
is $\mathcal{F}l(\CC^{n+1}/L) \simeq \mathcal{F}l(\CC^{n}) \simeq SU(n)/T'$, where $T'$ is 
the Cartan subgroup of diagonal matrices in $SU(n)$. The fibers inherit a $T-$action 
from $\mathcal{F}l(\CC^{n+1})$, but are not $T-$equivariantly isomorphic. If $p$ and $q$ are fixed 
points for the $T-$action on $\CC P^{n}$, then the fibers $F_p$ and $F_q$ are $T_e$-equivariantly 
isomorphic, where $T_e$ is the subtorus fixing the $\CC P^1$ in $\CC P^{n}$ with poles $p$ and $q$. 
The Weyl group $W_p$ of the fiber at $p$ is isomorphic to $S_{n-1}$, the Weyl group of $SU(n)$ and
the holonomy action action of $W_p$ on the equivariant cohomology of the fiber is equivalent 
to the induced action of $S_{n-1}$ on the equivariant cohomology of the 
flag variety $\mathcal{F}l(\CC^n)$.

We can iterate this fibration and construct a tower of fiber bundles
%
$$
\begin{CD}
\text{pt}\!\! @>>>\!\! \mathcal{A}_1@>>>\!\!\mathcal{A}_2\!\! @>>>\!\! \ldots\!\! 
@>>>\!\! \mathcal{A}_{n-1} \!\!@>>> \!\!\mathcal{A}_n\\
                   @.      @VVV            @VVV                   @.              @VVV                       @VVV \\
                   @.      \CC P^1   @.       \CC P^2         @.         @.              \CC P^{n-1}      @.          \CC P^{n}
\end{CD}
$$

Using this tower we construct a basis of invariant classes on $\mathcal{A}_n$ by 
repeatedly applying the isomorphism \eqref{eq:newmain}. 
A typical stage in the process 
is the following. By \eqref{eq:newmain} we have
$$H_T(\mathcal{A}_k) = 
H_T(\mathcal{A}_{k-1})^{W_{k-1}} \otimes_{\SS(\ft^*)^{W_{k-1}}} H_{T}(\CC P^{k})$$

Suppose we have constructed a basis of invariant classes 
on $\mathcal{A}_{k-1}$; this is trivial for $\mathcal{A}_0=$pt. We use, 
as a basis for $H_T(\CC P^{k})$, classes represented by powers of the equivariant 
symplectic form $\omega - \tau\in \Omega_T^2(\CC P^{k})$. The pull-backs of 
these classes to $\mathcal{A}_k$ are invariant under the holonomy action, and 
the classes given by the isomorphism \eqref{eq:newmain} form a basis of the 
equivariant cohomology of $\mathcal{A}_k$. As shown in \cite{GSZ}, this basis consists of classes that 
are invariant under the corresponding holonomy action.
By iterating this process we obtain an $\SS(\ft^*)$-basis of $H_T(\mathcal{A}_n)$ 
consisting of $W-$invariant classes. The combinatorial version of this construction 
is given in \cite[Section 5.1]{GSZ}. 
}\end{exm}

\begin{exm}{\rm 
Let $G=SO(2n+1)$, $K=T$ a maximal torus in $G$, and $K_1=SO(2)\times SO(2n-1)$.
Then $G/K_1\simeq Gr_2^+(\RR^{2n+1})$, the Grassmannian of oriented two planes in $\RR^{2n+1}$. Let $\mathcal{B}_n=G/K$ be the generic coadjoint orbit of type $B_n$ and 
$$\pi \colon \mathcal{B}_n \simeq G/K \to G/K_1 \simeq Gr_2^+(\R^{2n+1})$$
the natural projection. Since the fibers are isomorphic to $\mathcal{B}_{n-1}$
(but not isomorphic as $T-$spaces since the $T-$action on the pre-image of the $T-$fixed points of $Gr_2^+(\R^{2n+1})$ changes), we can produce a tower of fiber bundles
$$
\begin{CD}
\text{pt}\!\! @>>>\!\! \CC P^1\!\!@>>>\!\!\mathcal{B}_2\!\! @>>>\!\! \ldots\!\! 
@>>>\!\! \mathcal{B}_{n-1}\!\!@>>> \!\!\mathcal{B}_n\\
                   @.      @VVV            @VVV                   @.              @VVV                       @VVV \\
                   @.      \CC P^1   @.       Gr_2^+(\R^5)         @.         @.              Gr_2^+(\R^{2n-1})     @.          Gr_2^+(\R^{2n+1})
\end{CD}
$$
Since the classes represented by powers of the equivariant symplectic form 
$\omega - \tau\in \Omega_T^2(Gr_2^+(\R^{2k-1}))$ form a $W-$invariant basis for 
$H_T(Gr_2^+(\R^{2k-1}))$, we can repeat the same argument of the previous
example and produce a basis for $H_T(\mathcal{B}_n)$ consisting of $W-$invariant classes.
}\end{exm}

\begin{exm}{\rm 
Let $G=Sp(n)$ be the symplectic group, $K=T$ a maximal torus in $G$, and
$K_1=S^1\times Sp(n-1)$; then $G/K_1\simeq \CC P^{2n-1}$. Let $\mathcal{C}_n=G/K$
be the generic coadjoint orbit of type $C_n$ and 
$$\pi \colon \mathcal{C}_n \simeq G/K \to G/K_1 \simeq \CC P^{2n-1}$$
the natural projection. Then, since the fibers are isomorphic to $\mathcal{C}_{n-1}$, 
we obtain the following tower of fiber bundles
$$
\begin{CD}
\text{pt}\!\! @>>>\!\! \CC P^1\!\!@>>>\!\!\mathcal{C}_2\!\! @>>>\!\! \ldots\!\! 
@>>>\!\! \mathcal{C}_{n-1}\!\!@>>> \!\!\mathcal{C}_n\\
                   @.      @VVV            @VVV                   @.              @VVV                       @VVV \\
                   @.      \CC P^1   @.       \CC P^3        @.         @.              \CC P^{2n-3}    @.          \CC P^{2n-1}\end{CD}
$$
By taking classes represented by powers of the equivariant symplectic form $\omega-\tau\in \Omega_T^2(\CC P^{2k-1})$ we obtain a $W-$invariant basis of $H_T(\CC P^{2k-1})$, and iterating
the same procedure as before, a $W-$invariant basis of $H_T(\mathcal{C}_n)$.
}\end{exm}

\begin{exm}{\rm 
Let $G=SO(2n)$, $K=T$ a maximal torus in $G$, and $K_1=SO(2)\times SO(2n-2)$.
Then $G/K_1\simeq Gr_2^+(\R^{2n})$, the Grassmannian of oriented two planes in $\R^{2n}$.
Let $\mathcal{D}_n=G/K$
be the generic coadjoint orbit of type $D_n$ and 
$$\pi \colon \mathcal{D}_n \simeq G/K \to G/K_1 \simeq Gr_2^+(\R^{2n}) $$
the natural projection. Then, since the fibers are isomorphic to $\mathcal{D}_{n-1}$, 
we obtain the following tower of fiber bundles
$$
\begin{CD}
\text{pt}\!\! @>>>\!\! \CC P^1\times \CC P^1\!\!@>>>\!\!\mathcal{D}_3\!\! @>>>\!\! \ldots\!\! 
@>>>\!\! \mathcal{D}_{n-1}\!\!@>>> \!\!\mathcal{D}_n\\
                   @.      @VVV            @VVV                   @.              @VVV                       @VVV \\
                   @.      \CC P^1\times \CC P^1   @.       Gr_2^+(\R^6)        @.         @.              Gr_2^+(\R^{2n-2})    @.          Gr_2^+(\R^{2n})
                   \end{CD}
$$
}\end{exm}

In \cite{GSZ} we also show how these iterated invariant classes relate to a better known family 
of classes generating the equivariant cohomology of flag varieties, namely the equivariant Schubert 
classes.

\end{document}